\newtheorem{thm}{Theorem}[section]
\newtheorem{cor}[thm]{Corollary}
\newtheorem{lem}[thm]{Lemma}
\newtheorem{prop}[thm]{Proposition}
\theoremstyle{definition}
\newtheorem{dfn}[thm]{Definition}
\newtheorem{rem}[thm]{Remark}
\newtheorem{ques}[thm]{Question}
\newtheorem{ex}[thm]{Example}
\theoremstyle{remark}
\newtheorem*{ac}{Acknowledgments}
\def\Hom{\operatorname{Hom}}
\def\Ext{\operatorname{Ext}}
\def\Ann{\operatorname{Ann}}
\def\grade{\operatorname{grade}}
\def\depth{\operatorname{depth}}
\def\height{\operatorname{ht}}
\def\Gdim{\operatorname{Gdim}}
\def\pd{\operatorname{pd}}
\def\Ker{\operatorname{Ker}}
\def\Coker{\operatorname{Coker}}
\def\Im{\operatorname{Im}}
\def\K{\operatorname{K}}
\def\x{\bm{x}}
\def\M{\mathcal{M}}
\def\m{\mathfrak{m}}
\begin{document}
\allowdisplaybreaks
\title[Modules whose minimal free resolutions are self-dual or eventually periodic]{Modules whose minimal free resolutions are self-dual or eventually periodic}
\author{Shinnosuke Kosaka}
\address{Graduate School of Mathematics, Nagoya University, Furocho, Chikusaku, Nagoya 464-8602, Japan}
\email{kosaka.shinnosuke.d2@s.mail.nagoya-u.ac.jp}
\subjclass[2020]{13D02, 13H10}
\keywords{eventually periodic, G-perfect module, minimal free resolution, self-dual, syzygy module}

\begin{abstract}
Let $R$ be a commutative noetherian local ring. In this paper, we study the self-duality and eventual periodicity of minimal free resolutions of finitely generated $R$-modules in terms of their syzygy modules and Ext modules. As an application, we recover theorems of Dey.
\end{abstract}

\maketitle

\section{Introduction}
Throughout this paper, we assume that all rings are commutative and noetherian, and that all modules are finitely generated. Studying the structure of minimal free resolutions of modules is one of the most important themes in commutative algebra. In this paper, we focus on the self-duality and eventual periodicity of those resolutions.

Let $R$ be a local ring with residue field $k$ and $M$ an $R$-module. Take a minimal free resolution $F=(\cdots\overset{d_3}\to F_2\overset{d_2}\to F_1\overset{d_1}\to F_0\to0)$
of $M$. For each integer $n\ge0$, the cokernel of $d_{n+1}$ is called the {\em $n$-th syzygy} of $M$ and denoted by $\Omega_R^nM$. We say that $F$ is {\em self-dual} if $\pd_RM<\infty$ and $F$ is isomorphic to the complex $F^*=(0\to F_0^*\overset{d_1^*}\to F_1^*\overset{d_2^*}\to\cdots\overset{d_{r-1}^*}\to F_{r-1}^*\overset{d_r^*}\to F_r^*\to0)$, where $r=\pd_RM$ and $(-)^*=\Hom_R(-,R)$. For an integer $a>0$, we say that $F$ is {\em eventually periodic} with period $a$ if there exists an integer $n\ge0$ such that $F$ is isomorphic to a complex of the form
\[\begin{array}{ccccccccccccc} (\cdots & \longrightarrow & F_{n+a-1} & \overset{d_{n+a-1}}\longrightarrow & F_{n+a-2} & \overset{d_{n+a-2}}\longrightarrow & \cdots & \overset{d_{n+2}}\longrightarrow & F_{n+1} & \overset{d_{n+1}}\longrightarrow & F_{n} & &\\ & \longrightarrow & F_{n+a-1} & \overset{d_{n+a-1}}\longrightarrow & F_{n+a-2} & \overset{d_{n+a-2}}\longrightarrow & \cdots & \overset{d_{n+2}}\longrightarrow & F_{n+1} & \overset{d_{n+1}}\longrightarrow & F_{n} & &\\ & \overset{d_n}\longrightarrow & F_{n-1} & \overset{d_{n-1}}\longrightarrow & F_{n-2} & \overset{d_{n-2}}\longrightarrow & \cdots & \overset{d_2}\longrightarrow & F_1 & \overset{d_1}\longrightarrow & F_0 & \longrightarrow & 0). \end{array}\]
This is equivalent to the existence of an integer $n$ such that $\Omega_R^nM\cong\Omega_R^{n+a}M$. We say that $M$ is {\em totally self-dual} if it is nonzero and satisfies:
\[
\Ext^i_R(M,R)\cong
\begin{cases}
M & \text{if $i=\grade M$,}\\
0 & \text{if $i\ne\grade M$.}
\end{cases}
\]

Regarding the self-duality of minimal free resolutions, we obtain the following theorem, which is the first main result of this paper.

\begin{thm}[Proositions \ref{prop mfr sd ts}, \ref{prop mrf sd syz} and Theorem \ref{thm mfr sd syz}]\label{main thm sd}
Let $R$ be a local ring, $M$ a nonzero $R$-module of grade $g$, and $F$ a minimal free resolution of $M$.
\begin{enumerate}[\rm(1)]
\item
Suppose that $M$ has finite projective dimension. Then $F$ is self-dual if and only if $M$ is totally self-dual. When this is the case, one has $(\Omega^i_RM)^*\cong\Omega^{g+1-i}_RM$ for all integers $0\le i\le g-1$.
\item 
Suppose that there exist two integers $0\le m,n<g$ such that $(\Omega^m_RM)^*\cong\Omega^n_RM$. Then one has that $m,n\ge2$, that $g=\pd_RM=m+n-1$, and that $F$ is self-dual.
\end{enumerate}
\end{thm}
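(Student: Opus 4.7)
First I would rule out $m \le 1$ and $n \le 1$. The case $m = 0$ is immediate: then $M^* \cong \Omega_R^n M$, but $g = \grade M > 0$ forces $M^* = 0$, while $\Omega_R^n M \ne 0$ because $n < g \le \pd_R M$, a contradiction. For $m = 1$ (so $g \ge 2$), dualize $0 \to \Omega_R M \to F_0 \to M \to 0$; since both $M^*$ and $\Ext^1_R(M, R)$ vanish, we get $(\Omega_R M)^* \cong F_0^*$, a free module. The isomorphism then forces $\Omega_R^n M$ to be free, giving $\pd_R M \le n < g$, contradicting $\pd_R M \ge g$. The cases $n = 0, 1$ are handled symmetrically after dualizing the hypothesis, which requires the reflexivity of $\Omega_R^m M$ for $m \ge 2$ (established via the Auslander--Bridger exact sequence together with the vanishing $\Ext^i_R(M, R) = 0$ for $i < g$).

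Assuming $m, n \ge 2$, the central construction is a spliced free complex. Since $\Ext^i_R(\Omega_R^m M, R) = \Ext^{m+i}_R(M, R) = 0$ for $1 \le i \le g - m - 1$, dualizing the truncated resolution $\cdots \to F_{m+1} \to F_m \to \Omega_R^m M \to 0$ produces an exact sequence $0 \to (\Omega_R^m M)^* \to F_m^* \to F_{m+1}^* \to \cdots \to F_{g-1}^*$. Using $(\Omega_R^m M)^* \cong \Omega_R^n M$, splice this with the minimal resolution $\cdots \to F_{n+1} \to F_n \to \Omega_R^n M \to 0$ to form
\[
\mathbb{C} \colon\quad \cdots \to F_{n+1} \to F_n \to F_m^* \to F_{m+1}^* \to \cdots,
\]
exact at $F_n$ (by the splicing) and at each of $F_m^*, \ldots, F_{g-1}^*$ (by the Ext vanishing). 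The delicate step is to verify that $\mathbb{C}$ is a \emph{minimal} free complex; the maps within the original and the dualized portions are clearly minimal, so the only thing to check is that the glue map $F_n \twoheadrightarrow \Omega_R^n M \cong (\Omega_R^m M)^* \hookrightarrow F_m^*$ has image in $\m F_m^*$, i.e., $(\Omega_R^m M)^* \subseteq \m F_m^*$.

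Once minimality is in place, the uniqueness of minimal free resolutions does the rest. I would first establish $\pd_R M \le g$ (hence $= g$) from the minimality of $\mathbb{C}$ together with the Ext vanishing for $i < g$; then $\mathbb{C}$, truncated at its rightmost term $F_g^*$, becomes a minimal free resolution of $\Ext^g_R(M, R)$ of length $2g - m - n + 1$. Comparing this with the reversed dual of $F$---also a minimal free resolution of $\Ext^g_R(M, R)$ by the argument of part~(1)---length matching yields $g = m + n - 1$, and matching the rightmost cokernels yields $\Ext^g_R(M, R) \cong M$. Hence $M$ is totally self-dual, and part~(1) delivers the self-duality of $F$. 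The principal obstacles are justifying the minimality of $\mathbb{C}$ at the gluing position and establishing $\pd_R M < \infty$ before invoking the length comparison; once these are cleared, the rest amounts to careful bookkeeping with Betti numbers.
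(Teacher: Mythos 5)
Your proposal takes a genuinely different route from the paper, and while the core ideas are adjacent, there are real gaps in the key step.

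The paper splices in the \emph{opposite} direction: it dualizes the \emph{head} of $F$ to get the exact sequence $0 \to F_0^* \to \cdots \to F_{m-1}^* \to (\Omega_R^m M)^* \to 0$, and glues this onto the head of $F$ via $(\Omega_R^m M)^* \cong \Omega_R^n M \hookrightarrow F_{n-1}$ to produce a \emph{bounded} minimal free resolution of $M$ itself, namely $0 \to F_0^* \to \cdots \to F_{m-1}^* \to F_{n-1} \to \cdots \to F_0 \to M \to 0$. This immediately yields $\pd_R M = m+n-1 < \infty$, from which $m,n \ge 2$ drops out (since $m,n < g \le \pd_R M = m+n-1$) without the separate case analysis you lead with. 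The remainder of the paper's argument uses reflexivity of $\Omega_R^m M$ (Ma\c sek) and the uniqueness of minimal free resolutions, then dualizes the resulting isomorphism of complexes to produce the self-duality directly.

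Your splice instead glues the \emph{tail} of $F^*$ onto the \emph{tail} of $F$, producing a complex $\mathbb{C}$ that is a priori unbounded on the left. This is where the gap sits. You write that you would ``first establish $\pd_R M \le g$ from the minimality of $\mathbb{C}$ together with the Ext vanishing,'' but $\mathbb{C}$ as constructed does not bound $\pd_R M$ on its own; it is a resolution of the cokernel $Y = \Coker(d_{g-1}^*)$, not of $M$, and it is infinite to the left if $\pd_R M = \infty$. To extract finiteness you would need the additional observation that the truncated dual $0 \to F_0^* \to \cdots \to F_{g-1}^* \to Y \to 0$ is a bounded minimal free resolution of $Y$, and then invoke uniqueness to force $\mathbb{C}$ (truncated at $F_{g-1}^*$) to also be bounded, yielding $\pd_R M = m+n-1$. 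That comparison is not stated in your proposal, and the claim $\pd_R M \le g$ is not what falls out --- one gets $\pd_R M = m+n-1$, and the equality $m+n-1=g$ only follows afterward from the self-duality and the fact that $\Ext_R^{\pd M}(M,R) \ne 0$. Your subsequent step (truncating $\mathbb{C}$ at $F_g^*$ to get a resolution of $\Ext_R^g(M,R)$) already presupposes $\pd_R M = g$, which makes the length comparison circular as written. Finally, the minimality at the splice, which you flag as delicate, does in fact hold: since $m < g$, one has $\Ext_R^m(M,R)=0$, so $(\Omega_R^m M)^* = \Im(d_m^*) \subseteq \mathfrak{m}F_m^*$; this is worth stating rather than leaving open.
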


Letting $M=k$ in the above theorem, we recover a theorem of Dey \cite[Theorem 1.1]{D}.

\begin{cor}[Dey]\label{dey sd}
Let $R$ be a local ring with $t=\depth R\ge3$. If $(\Omega_R^ik)^*\cong\Omega_R^ik$ for some integer $2\le i<t$, then $R$ is regular and $t=2i-1$.
\end{cor}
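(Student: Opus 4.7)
The plan is to deduce the corollary as a direct specialization of Theorem \ref{main thm sd}(2) to the residue field $M=k$. First I would record that $\grade_R k=\depth R=t$: every element of $\m$ annihilates $k$, so a maximal $R$-regular sequence in $\m$ is automatically a maximal $k$-regular sequence in $\Ann_R k$, whence the two invariants coincide. Thus $g=t$ in the notation of the theorem. Taking $m=n=i$, the hypothesis $(\Omega_R^i k)^*\cong\Omega_R^i k$ with $2\le i<t$ supplies precisely a pair of integers $0\le m,n<g$ with $(\Omega_R^m k)^*\cong\Omega_R^n k$, so part (2) of Theorem \ref{main thm sd} applies. The conclusion $g=\pd_R k=m+n-1=2i-1$ combined with $g=t$ gives $t=2i-1$ at once.

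It remains to deduce that $R$ is regular, but this is immediate from the finiteness $\pd_R k=2i-1<\infty$ by the Auslander--Buchsbaum--Serre theorem. I do not anticipate any genuine obstacle here: the entire argument is to check that the hypotheses of Theorem \ref{main thm sd}(2) are met with $M=k$ and $m=n=i$, extract the two numerical consequences $\pd_R k=2i-1$ and $t=2i-1$, and invoke a classical characterization of regular local rings. The only nontrivial input is Theorem \ref{main thm sd}(2) itself; the remaining identifications $\grade_R k=\depth R$ and $\pd_R k<\infty\Rightarrow R$ regular are standard.
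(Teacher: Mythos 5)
Your proof is correct and takes essentially the same route as the paper: specialize Theorem~\ref{main thm sd}(2) to $M=k$ with $m=n=i$, read off $t=\grade_R k=\pd_R k=2i-1$, and conclude regularity. The only cosmetic difference is that the paper deduces regularity from the self-duality conclusion via Example~\ref{ex mfr sd}(2), whereas you invoke Auslander--Buchsbaum--Serre directly from $\pd_R k<\infty$; both are immediate once Theorem~\ref{main thm sd}(2) is in hand.
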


Regarding the eventual periodicity of minimal free resolutions, we obtain the following theorem, which is the second main result of this paper.

\begin{thm}[Theorems \ref{thm mfr ep} and \ref{thm mfr ep2}]\label{main thm ep}
Let $R$ be a local ring, $M$ a G-perfect $R$-module of grade $g$, and $F$ a minimal free resolution of $M$. Set $N=\Ext_R^g(M,R)$.
\begin{enumerate}[\rm(1)]
\item
Suppose that $F$ is eventually periodic with period $a>0$. Then one has $(\Omega_R^iN)^*\approx\Omega_R^{g+ma-i}M$ for all integers $i\ge g$ and $m\ge\frac{i}{a}$.
\item 
Suppose that there exist two integers $i\ge g$ and $j\ge0$ such that $i+j>g$ and $(\Omega_R^iN)^*\approx\Omega_R^jM$. Then $F$ is eventually periodic with period $i+j-g$.
\end{enumerate}
\end{thm}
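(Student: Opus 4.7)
The plan is to translate both parts into periodicity statements for the complete resolution of the totally reflexive module $X:=\Omega_R^gM$, by way of a key lemma identifying $(\Omega_R^iN)^*$ with an inverse (Tate) syzygy of $X$.

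First I would establish the key lemma: for every integer $i\ge g$ there is a stable isomorphism
$(\Omega_R^iN)^*\approx\Omega_R^{-i}(X),$
where $\Omega_R^{-i}(X)$ denotes the $i$-th Tate cosyzygy of $X$ in its complete resolution. I would obtain this by showing that $N$ and $X^*$ are isomorphic in the singularity category of $R$, via the short exact sequence $0\to \Im d_g^*\to X^*\to N\to 0$ arising from the dualized minimal free resolution. For $g\le 1$ the kernel $\Im d_g^*$ is free (or zero); for $g\ge 2$, the exactness of the dual complex below degree $g$ (forced by $\Ext_R^i(M,R)=0$ for $i<g$) presents $\Im d_g^*$ as the cokernel of a finite complex of free modules, so $\pd_R(\Im d_g^*)<\infty$ and it vanishes in the singularity category. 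Taking $i$-th syzygies for $i\ge g$ promotes this to a genuine stable isomorphism $\Omega_R^iN\approx\Omega_R^i(X^*)$ of totally reflexive modules, and dualizing (using $(\Omega_R^iY)^*\approx\Omega_R^{-i}(Y^*)$ for totally reflexive $Y$ together with $X^{**}=X$) yields the key lemma.

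For part (1), eventual periodicity of $F$ with period $a$ gives $\Omega_R^nM\cong\Omega_R^{n+a}M$ for some $n\ge g$, hence $\Omega_R^{n-g}X\cong\Omega_R^{n-g+a}X$. Since the syzygy functor is an autoequivalence of the stable category of totally reflexive modules, this lifts to stable isomorphisms $\Omega_R^kX\approx\Omega_R^{k+a}X$ for \emph{all} $k\in\mathbb{Z}$, i.e., the complete resolution of $X$ is $a$-periodic throughout. For any $m\ge i/a$ one therefore has $\Omega_R^{-i}(X)\approx\Omega_R^{ma-i}(X)=\Omega_R^{g+ma-i}M$ (the right-hand side is a positive syzygy since $ma-i\ge 0$); combined with the key lemma this gives the claim.

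For part (2), assume $(\Omega_R^iN)^*\approx\Omega_R^jM$ with $i\ge g$, $j\ge 0$, and $i+j>g$. Since the left-hand side is totally reflexive, so must be $\Omega_R^jM$, forcing $j\ge g$ and $\Omega_R^jM=\Omega_R^{j-g}(X)$. The key lemma gives $\Omega_R^{j-g}(X)\approx\Omega_R^{-i}(X)$, a stable isomorphism between two terms of the complete resolution of $X$ at positions differing by $i+j-g>0$; this forces the complete resolution of $X$ (and hence the minimal free resolution of $M$) to be periodic with period $i+j-g$.

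The hard part will be the key lemma, in particular promoting the singularity-category identification $N\approx X^*$ to a genuine stable isomorphism of the $i$-th syzygies. This depends on the (standard) fact that the stable category of totally reflexive modules embeds fully faithfully into the singularity category of $R$, so that the finite-projective-dimension ``correction'' $\Im d_g^*$ is absorbed after taking enough syzygies.
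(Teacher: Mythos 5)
Your argument is correct and is essentially the paper's proof recast in the language of Tate cosyzygies and the stable (or singularity) category. Your key lemma $(\Omega_R^iN)^*\approx\Omega_R^{-i}(\Omega_R^gM)$ becomes, upon applying $\Omega_R^i$ and the G-perfect duality $\Ext_R^g(N,R)\cong M$, precisely Lemma~\ref{lem dual st iso}, derived from the same two exact pieces of the dualized minimal free resolution and the same horseshoe/cancellation step; where you appeal to the autoequivalence $\Omega$ on the stable category of totally reflexive modules, the paper stays with positive syzygies and cites a concrete cancellation lemma of Dey, but these are the same fact in different guise.
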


Here, a module $M$ over a ring $R$ is called {\em G-perfect} if its Gorenstein dimension coincides with $\grade_RM$. For $R$-modules $M$ and $N$, we write $M\approx N$ if there exist projective $R$-modules $P,Q$ such that $M\oplus P\cong N\oplus Q$.

Letting $M=k$ in Theorem \ref{main thm ep}, we recover another theorem of Dey \cite[Theorem 1.2]{D}.

\begin{cor}[Dey]\label{dey ep}
Let $R$ be a local ring with $t=\depth R\ge2$.
\begin{enumerate}[\rm(1)]
\item 
If $R$ is a hypersurface and $t$ is even, then one has $(\Omega_R^ik)^*\cong\Omega_R^ik$ for all integers $i\ge t$.
\item
If $(\Omega_R^ik)^*\cong\Omega_R^ik$ for some integer $i\ge t$, then $R$ is a hypersurface.
\end{enumerate}
\end{cor}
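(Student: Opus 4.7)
The plan is to specialize Theorem~\ref{main thm ep} to $M=k$ in both parts. Since $\grade k=\depth R=t$, and since $R$ Gorenstein gives $\Gdim k=t$ with $\Ext_R^t(k,R)\cong k$, the setup of the theorem reduces to $g=t$ and $N=k$. The external inputs I will rely on are the classical facts that the minimal free resolution of any module over a hypersurface is eventually $2$-periodic (Eisenbud), and conversely that bounded Betti numbers of $k$ force $R$ to be a hypersurface (Gulliksen).

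For (1), I split into cases. If $R$ is regular, then $\Omega_R^ik=0$ for all $i\ge t=\pd_Rk$ and the claim is trivial. Otherwise $R$ is a non-regular hypersurface, hence Gorenstein, so $k$ is G-perfect of grade $t$ and the minimal resolution of $k$ is eventually periodic with $a=2$. Theorem~\ref{main thm ep}(1) then yields
\[
(\Omega_R^ik)^*\approx\Omega_R^{t+2m-i}k\qquad(i\ge t,\ m\ge i/2),
\]
and choosing $m=i-t/2$---legitimate since $t$ is even and the constraint $m\ge i/2$ reduces to $i\ge t$---makes the right-hand side $\Omega_R^ik$, so $(\Omega_R^ik)^*\approx\Omega_R^ik$. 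To promote $\approx$ to $\cong$, I observe that $\Omega_R^ik$ sits in a minimal resolution of the non-free module $k$ and hence has no nonzero free summand. For $i\ge t$ it is moreover MCM over the Gorenstein ring $R$, and $(-)^*$ is a self-duality on the category of MCM $R$-modules; so $(\Omega_R^ik)^*$ also has no free summand. Cancellation of free summands over a local ring then gives $(\Omega_R^ik)^*\cong\Omega_R^ik$.

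For (2), the strategy is three-stage. First I will show $R$ is Gorenstein, so that $k$ becomes G-perfect. Then I apply Theorem~\ref{main thm ep}(2) with the integers $i$ and $j=i$: these satisfy $i\ge g=t$ and $i+j=2i>g$, and the hypothesis $(\Omega_R^ik)^*\approx\Omega_R^ik$ is given, so the conclusion is that the minimal free resolution of $k$ is eventually periodic with period $2i-t$. In particular the Betti numbers of $k$ are bounded, and by Gulliksen's theorem this forces $R$ to be a hypersurface.

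The hard part is the first stage of (2). My plan is to deduce that $\Omega_R^ik$ is totally reflexive, which by Auslander--Bridger will yield $\Gdim_Rk\le i<\infty$ and hence $R$ Gorenstein. Reflexivity of $\Omega_R^ik$ is automatic for $i\ge2$, so the real task is the Ext-vanishing $\Ext_R^{\ge1}(\Omega_R^ik,R)=0$, equivalently $\Ext_R^n(k,R)=0$ for $n\ge i+1$. I would approach this via the Auslander transpose: the standard stable relation $M^*\approx\Omega_R^1\operatorname{Tr}M$ combined with the hypothesis gives $\Omega_R^1\operatorname{Tr}(\Omega_R^ik)\approx\Omega_R^ik$, and I would then try to extract the desired vanishing by comparing the Bass numbers of $R$ with the Betti numbers of the syzygies in play. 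This Ext-vanishing step is the obstacle on which I expect to spend the most effort.
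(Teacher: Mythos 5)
For part (1), your argument essentially reproduces the paper's: Theorem~\ref{main thm ep}(1) gives $(\Omega_R^ik)^*\approx\Omega_R^{t+2m-i}k$, and the choice $m=i-t/2$ (valid since $t$ is even and $i\ge t$) yields $(\Omega_R^ik)^*\approx\Omega_R^ik$. You are more careful than the paper in two respects: you separate the regular case explicitly, and you address the promotion of the stable isomorphism $\approx$ to a genuine isomorphism $\cong$ by cancelling free summands. (The paper silently elides this.) Two small caveats here: in the regular case $\Omega_R^tk\cong R\neq0$, so the ``trivial'' isomorphism is $R^*\cong R$ rather than $0=0$; and the claim that $\Omega_R^ik$ has no nonzero free direct summand when $R$ is non-regular, while true, is a nontrivial theorem and should be cited rather than treated as obvious.

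For part (2), there is a genuine gap. Your plan is correct in outline --- show $R$ is Gorenstein, then invoke Theorem~\ref{main thm ep}(2) with $j=i$ to get eventual periodicity of period $2i-t$, then use Gulliksen to conclude hypersurface --- but the first stage is not carried out. You yourself flag this: the sketch via $M^*\approx\Omega_R^1\operatorname{Tr}M$ and a comparison of Bass and Betti numbers is a plan, not a proof, and it is not evident that it closes. Moreover the auxiliary claim that reflexivity of $\Omega_R^ik$ is ``automatic for $i\ge2$'' is false as stated: a second syzygy is torsionless (a submodule of a free module) but need not be reflexive over an arbitrary local ring; reflexivity of $\Omega_R^ik$ requires additional vanishing of $\Ext$ of the transpose, which is precisely what you are trying to establish. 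The paper sidesteps this entire difficulty by citing \cite[Lemma~3.2]{D}, which says exactly that the hypothesis $(\Omega_R^ik)^*\cong\Omega_R^ik$ with $i\ge\depth R$ forces $R$ to be Gorenstein. So either invoke that lemma (as the paper does) or supply a complete argument for the Gorenstein property; as written, the proposal leaves the central obstruction of part (2) open.
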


In Section 2, we present some examples of totally self-dual modules and investigate their basic properties. In Section 3, we shall prove Theorems \ref{main thm sd} and \ref{main thm ep}.

\section{Totally self-dual modules}
Throughout this section, let $R$ be a ring. We begin with the definition of totally self-dual modules.

\begin{dfn}
Let $M$ be a nonzero $R$-module of grade $g$. We say that $M$ is {\em totally self-dual} if
\[
\Ext^i_R(M,R)\cong
\begin{cases}
M & \text{if $i=g$},\\
0 & \text{if $i\ne g$}.
\end{cases}
\]
\end{dfn}

We state some basic properties of totally self-dual modules.

\begin{lem}\label{ts property}
\begin{enumerate}[\rm(1)]
\item 
If $M$ and $N$ are totally self-dual $R$-modules of the same grade, then $M\oplus N$ is also totally self-dual.
\item 
Let $R\to S$ be a flat ring homomorphism and $M$ an $R$-module. If $M$ is totally self-dual and $M\otimes_RS$ is nonzero, then $M\otimes_RS$ is totally self-dual as an $S$-module. The converse holds if $R$ is local and $R\to S$ is the completion map.
\item
Let $M$ be an $R$-module and $x\in\Ann_RM$ an $R$-regular element. Then $M$ is totally self-dual as an $R$-module if and only if $M$ is totally self-dual as an $R/(x)$-module.
\item 
Let $M$ be a totally self-dual $R$-module of grade $g$ and $x\in R$ an $M$-regular element such that $M\ne xM$. Then $M/xM$ is a totally self-dual $R$-module of grade $g+1$.
\end{enumerate}
\end{lem}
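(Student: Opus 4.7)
\medskip

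\textbf{Plan.} I would treat the four parts separately, since each rests on a different standard tool (direct sums, flat base change, change of rings along a regular element killing $M$, and the Ext long exact sequence of $0\to M\xrightarrow{x}M\to M/xM\to0$).

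For (1), since $\Ext^i_R(-,R)$ commutes with finite direct sums, the computation is immediate: the grade of $M\oplus N$ is still $g$, and
\[
\Ext^g_R(M\oplus N,R)\cong\Ext^g_R(M,R)\oplus\Ext^g_R(N,R)\cong M\oplus N,
\]
while all other Ext modules vanish. For (2), I would invoke flat base change in the form $\Ext^i_S(M\otimes_RS,S)\cong\Ext^i_R(M,R)\otimes_RS$ (valid since $M$ is finitely presented and $R\to S$ is flat). This kills all $\Ext^i$ for $i\ne g$ and identifies $\Ext^g_S(M\otimes_RS,S)$ with $M\otimes_RS$, assuming the latter is nonzero. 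For the converse when $R\to\widehat{R}$ is the completion, faithful flatness gives $\Ext^i_R(M,R)=0$ for $i\ne g$, and then I would use the classical fact that two finitely generated $R$-modules are isomorphic iff their completions are isomorphic as $\widehat{R}$-modules to pull the isomorphism $\Ext^g_R(M,R)\otimes\widehat{R}\cong M\otimes\widehat{R}$ back to $R$.

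For (3), the key input is the Rees isomorphism
\[
\Ext^i_R(M,R)\cong\Ext^{i-1}_{R/(x)}(M,R/(x))\qquad(i\ge 1),
\]
valid whenever $x\in\Ann_RM$ is $R$-regular. This shifts grades by $1$ and identifies the nonzero Ext module on one side with the nonzero Ext module on the other; the equivalence of the two self-duality conditions then falls out by reading off the $i=g$ term on each side.

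For (4), I would apply $\Hom_R(-,R)$ to the short exact sequence $0\to M\xrightarrow{x}M\to M/xM\to0$. The long exact sequence, combined with the total self-duality $\Ext^g_R(M,R)\cong M$ and the vanishing of all other $\Ext^i_R(M,R)$, yields:
\begin{itemize}
\item $\Ext^j_R(M/xM,R)=0$ for $j<g$ (trivially);
\item $\Ext^g_R(M/xM,R)=\ker\bigl(M\xrightarrow{x}M\bigr)=0$, since $x$ is $M$-regular;
\item $\Ext^{g+1}_R(M/xM,R)=\operatorname{coker}\bigl(M\xrightarrow{x}M\bigr)=M/xM$;
\item $\Ext^j_R(M/xM,R)=0$ for $j>g+1$.
\end{itemize}
Since $M/xM\ne 0$ by hypothesis, this simultaneously computes $\grade_R(M/xM)=g+1$ and establishes total self-duality.

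\textbf{Main obstacle.} The only part that is not a mechanical unwinding of definitions is the converse in (2): one has to be sure that the completion functor reflects isomorphism of finitely generated modules, so that $\Ext^g_R(M,R)\otimes\widehat{R}\cong M\otimes\widehat{R}$ upgrades to $\Ext^g_R(M,R)\cong M$ over $R$ itself. Everything else is either an application of Hom–Ext commuting with flat base change and direct sums, the Rees change-of-rings formula, or a direct read-off from the long exact Ext sequence.
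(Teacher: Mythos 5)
Your proposal matches the paper's argument in all four parts: direct-sum additivity of Ext for (1), flat base change plus the fact that completion reflects isomorphism (the paper cites Leuschke--Wiegand for this) for (2), the Rees change-of-rings isomorphism (Matsumura, Lemma 18.2) for (3), and the long exact Ext sequence from $0\to M\xrightarrow{x}M\to M/xM\to 0$ for (4). You have correctly identified the converse in (2) as the only point requiring a nontrivial external input, and your handling of it is exactly the paper's.
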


\begin{proof}
(1) Clear.

(2) Put $g=\grade M$. If $M$ is totally self-dual, then we have
\[
\Ext_S^i(M\otimes_RS,S)\cong\Ext_R^i(M,R)\otimes_RS\cong
\begin{cases}
M\otimes_RS & \text{if $i=g$},\\
0 & \text{if $i\ne g$}.
\end{cases}
\]
Hence, $M\otimes_RS$ is totally self-dual as an $S$-module. Next, we assume that $R$ is local, that $R\to S$ is the completion map, and that $M\otimes_RS$ is totally self-dual as an $S$-module. Then we get
\[
\Ext_R^i(M,R)\otimes_RS\cong\Ext_S^i(M\otimes_RS,S)\cong
\begin{cases}
M\otimes_RS & \text{if $i=g$},\\
0 & \text{if $i\ne g$}.
\end{cases}
\]
By \cite[Corollary 1.15]{L}, $M$ is totally self-dual.

(3) By \cite[Lemma 18.2]{Mat}, we have
\[
\Ext^i_R(M,R)\cong
\begin{cases}
\Ext^{i-1}_{R/(x)}(M,R/(x)) & \text{if $i>0$},\\
0 & \text{if $i=0$}.
\end{cases}
\]
The assertion follows from this.

(4) From the short exact sequence $0\to M\overset{x}\to M\to M/xM\to0$, we obtain an exact sequence $\Ext_R^{i-1}(M,R)\to\Ext_R^i(M/xM,R)\to\Ext_R^i(M,R)$ for each $i$. If $i\ne g,g+1$, then we have $\Ext_R^{i-1}(M,R)=\Ext_R^i(M,R)=0$, and therefore $\Ext_R^i(M/xM,R)=0$. Moreover, there is an exact sequence
\[0\to\Ext_R^g(M/xM,R)\to\Ext_R^g(M,R)\overset{x}\to\Ext_R^g(M,R)\to\Ext_R^{g+1}(M/xM,R)\to0.\]
Since $\Ext_R^g(M,R)\cong M$ and $x$ is a regular element of $M$, we see that $\Ext_R^g(M/xM,R)=0$ and that $\Ext_R^{g+1}(M/xM,R)$ is isomorphic to $M/xM$. Thus, $M/xM$ is a totally self-dual $R$-module of grade $g+1$.
\end{proof}

The following are examples of totally self-dual modules.

\begin{ex}\label{ex ts}
\begin{enumerate}[\rm(1)]
\item 
It is obvious that every nonzero free $R$-module is totally self-dual.
\item
Let $R$ be a local ring with residue field $k$. Then $k$ is totally self-dual if and only if $R$ is Gorenstein by \cite[Theorem 18.1]{Mat}.
\item 
Let $R$ be a principal ideal domain. Then every nonzero torsion $R$-module $M$ is of the form $R/(a_1)\oplus R/(a_2)\oplus\cdots\oplus R/(a_n)$ such that $a_1,a_2,\dots,a_n$ are neither zero nor units. Hence, $M$ is totally self-dual by (1) and (4) of Lemma \ref{ts property}.
\item 
Let $R$ be a Gorenstein local ring and $I$ a proper ideal of $R$ such that $R/I$ is Cohen-Macaulay. Set $g=\grade I=\height I$. Then $\Ext_R^i(R/I,R)=0$ for every integer $i\ne g$ and $\Ext_R^g(R/I,R)$ is a canonical module of $R/I$ by \cite[Theorem 3.3.7]{BH}. Therefore, $R/I$ is totally self-dual as an $R$-module if and only if $R/I$ is Gorenstein.
\end{enumerate}
\end{ex}

Next, we examine properties of totally self-dual modules in terms of Gorenstein dimension. We first recall the definitions of Gorenstein dimension and related notions. 

\begin{dfn}
Let $M$ be an $R$-module.
\begin{enumerate}[\rm(1)]
\item
We say that $M$ is totally reflexive if it satisfies the following two conditions:
\begin{enumerate}[\rm(a)]
\item 
$M$ is reflexive. That is, the natural homomorphism $M\to M^{**}$ is isomorphic.
\item 
$\Ext_R^i(M,R)=\Ext_R^i(M^*,R)=0$ for all positive integers $i$.
\end{enumerate}
\item 
The {\em Gorenstein dimension} (or {\em G-dimension}) of $M$ is the infimum of the lengths of resolutions of $M$ by totally reflexive modules, and it is denoted by $\Gdim_RM$. That is,
\[
\Gdim_RM\coloneq\inf\left\{n\,\middle\vert
\begin{array}{c}
\text{There exists an exact sequence $0\to G_n\to\cdots\to G_0\to M\to0$}\\
\text{of $R$-modules such that $G_0,\dots,G_n$ are totally reflexive.}
\end{array}
\right\}.
\]
\item 
We say that $M$ is {\em G-perfect} if $M\ne0$ and $\grade_RM=\Gdim_RM$. We denote the category of G-perfect $R$-modules of grade $g$ by $\M_g(R)$.
\end{enumerate}
\end{dfn}

Note that the inequalities $\grade_RM\le\Gdim_RM\le\pd_RM$ hold for every nonzero $R$-module $M$. Therefore, if $M$ is perfect or totally reflexive, then $M$ is G-perfect.

The category $\M_g(R)$ has the following property.

\begin{prop}\label{gperf duality}
Let $g\ge0$ be an integer. Then the contravariant functor $\Ext_R^g(-,R)$ provides a duality on $\M_g(R)$. That is, the following two conditions hold for every $M\in\M_g(R)$:
\begin{enumerate}[\rm(1)]
\item 
$\Ext_R^g(M,R)\in\M_g(R)$.
\item
One has a natural isomorphism $\Ext_R^g(\Ext_R^g(M,R),R)\cong M$.
\end{enumerate}
\end{prop}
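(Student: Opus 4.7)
The plan is to extract everything from a single totally reflexive resolution of $M$ of length $g$, dualize it once to obtain such a resolution of $N=\Ext_R^g(M,R)$, and then dualize once more to recover $M$. Since $M\in\M_g(R)$, the equality $\grade_RM=\Gdim_RM=g$ supplies an exact sequence
\[0\to G_g\to G_{g-1}\to\cdots\to G_0\to M\to 0\]
with each $G_i$ totally reflexive; this is the only input I will use.

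Applying $(-)^*=\Hom_R(-,R)$ to the truncated complex $0\to G_g\to\cdots\to G_0\to 0$ yields a complex $0\to G_0^*\to\cdots\to G_g^*\to 0$ whose $i$-th cohomology is $\Ext_R^i(M,R)$. The definition of grade forces $\Ext_R^i(M,R)=0$ for $i<g$, while the standard vanishing $\Ext_R^i(-,R)=0$ above the Gorenstein dimension gives the same for $i>g$. Hence
\[0\to G_0^*\to G_1^*\to\cdots\to G_g^*\to N\to 0\]
is exact. Since the dual of a totally reflexive module is again totally reflexive (an immediate consequence of biduality together with the Ext-vanishing), this is a totally reflexive resolution of $N$ of length at most $g$, so $\Gdim_RN\le g$.

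Dualizing once more and invoking the canonical isomorphisms $G_i^{**}\cong G_i$, the complex $0\to G_g^{**}\to\cdots\to G_0^{**}\to 0$ is identified with the original truncated resolution, whose cohomology is $M$ concentrated in the top degree. Consequently $\Ext_R^i(N,R)=0$ for $i\ne g$ and $\Ext_R^g(N,R)\cong M$; in particular $\grade_RN=g=\Gdim_RN$, so $N\in\M_g(R)$, which is (1), and the isomorphism $\Ext_R^g(\Ext_R^g(M,R),R)\cong M$ is (2). Naturality in $M$ is inherited from the naturality of the biduality map $G\to G^{**}$ on totally reflexive modules: any morphism $M\to M'$ in $\M_g(R)$ lifts, uniquely up to homotopy, to a chain map of totally reflexive resolutions, and applying $(-)^*$ twice produces a chain map that agrees with the original via the termwise biduality isomorphisms, making $M\to\Ext_R^g(\Ext_R^g(M,R),R)$ natural in $M$.

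The one step that must be imported from outside the excerpt is the vanishing $\Ext_R^i(M,R)=0$ for $i>\Gdim_RM$, a foundational result on Gorenstein dimension; once that is invoked, everything else is careful bookkeeping of a double dualization, with all biduality handled termwise on the totally reflexive modules of the resolution.
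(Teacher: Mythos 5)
Your argument is correct, but it takes a genuinely different route from the paper's. The paper handles part (1) by citing Yassemi--Khatami--Sharif, and for part (2) it chooses an $R$-regular sequence $\x=x_1,\dots,x_g$ inside $\Ann_RM$, passes to the quotient ring $R/(\x)$ where $M$ becomes totally reflexive (via Christensen, Prop.~2.2.8), and then invokes Rees's change-of-rings isomorphism $\Ext_R^g(-,R)\cong\Hom_{R/(\x)}(-,R/(\x))$ twice, reducing the double Ext to ordinary biduality over $R/(\x)$. You instead stay over $R$, take a length-$g$ totally reflexive resolution $G_\bullet\to M$, and run the duality termwise: since each $G_i$ satisfies $\Ext_R^{>0}(G_i,R)=0$, the dualized complex $G_\bullet^*$ computes $\Ext_R^\bullet(M,R)$, the grade and G-dimension bounds collapse its cohomology to degree $g$, giving a totally reflexive resolution of $N$, and dualizing once more and using $G_i^{**}\cong G_i$ recovers $M$. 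Your approach has the advantage of proving (1) and (2) in one stroke and of making the naturality visible at the level of resolutions; the paper's approach is shorter because it outsources (1) and reduces (2) to a cited change-of-rings lemma plus biduality of a single module. One small point worth making explicit in your write-up: the reason the dualized complex $G_\bullet^*$ computes $\Ext_R^\bullet(M,R)$ (and likewise for the second dualization) is precisely the vanishing $\Ext_R^{j}(G_i,R)=0$ for $j>0$ built into total reflexivity; you use this fact implicitly, and it is the linchpin of the whole termwise argument.
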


\begin{proof}
(1) This is shown in \cite[Theorem 1.8]{YKS}.

(2) Let $\x=x_1,\dots,x_g$ be an $R$-sequence in $\Ann_R(M)\subset\Ann_R(\Ext_R^g(M,R))$. Then $M$ is totally reflexive as an $R/(\x)$-module by \cite[Proposition 2.2.8]{C}. Therefore, by \cite[Lemma 18.2]{Mat}, we have natural isomorphisms $\Ext_R^g(\Ext_R^g(M,R),R)\cong\Hom_{R/(\x)}(\Hom_{R/(x)}(M,R/(\x)),R/(\x))\cong M$.
\end{proof}

The following proposition shows that totally self-dual modules are exactly the fixed points of $\Ext_R^g(-,R)$ in $\M_g(R)$.

\begin{prop}
Let $M$ be a nonzero $R$-module of grade $g$. Then the following two conditions are equivalent.
\begin{enumerate}[\rm(1)]
\item
$M$ is totally self-dual.
\item
$M$ is $G$-perfect and $\Ext_R^g(M,R)\cong M$.
\end{enumerate}
\end{prop}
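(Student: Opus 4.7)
My plan is to prove each implication separately, with (1)$\Rightarrow$(2) handled by reduction to the grade-zero case.

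For (2)$\Rightarrow$(1), the argument is direct. When $M$ is G-perfect of grade $g$, the vanishing $\Ext^i_R(M,R)=0$ for $i\ne g$ is automatic: for $i<g$ it is the definition of grade, and for $i>g$ it follows from the identity $\Gdim_RM=\sup\{i\ge 0\mid\Ext^i_R(M,R)\ne 0\}$ valid whenever the G-dimension is finite (proved by computing $\Ext$ from a finite totally reflexive resolution). Combined with the hypothesis $\Ext^g_R(M,R)\cong M$, this is precisely total self-duality.

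For (1)$\Rightarrow$(2), I would choose a maximal $R$-regular sequence $\x=x_1,\dots,x_g\in\Ann_RM$. By Lemma \ref{ts property}(3), $M$ remains totally self-dual as a module over $S:=R/(\x)$, now of grade~$0$. The change-of-rings isomorphism \cite[Lemma 18.2]{Mat} converts $\Ext^g_R(M,R)\cong M$ into $\Hom_S(M,S)\cong M$, while \cite[Proposition 2.2.8]{C} turns G-perfection of $M$ over $R$ into total reflexivity of $M$ over $S$. Hence it suffices to prove the grade-zero claim: a finitely generated $S$-module $M$ satisfying $\Hom_S(M,S)\cong M$ and $\Ext^{\ge 1}_S(M,S)=0$ is totally reflexive.

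The plan for the grade-zero claim is to construct a complete resolution of $M$. Starting from a free resolution $\cdots\to F_1\to F_0\to M\to 0$, the $\Ext$-vanishing makes $0\to M^*\to F_0^*\to F_1^*\to\cdots$ exact, and splicing along an isomorphism $M\cong M^*$ gives a bi-infinite exact complex
\[
T_\bullet\colon\ \cdots\to F_1\to F_0\to F_0^*\to F_1^*\to\cdots
\]
of finitely generated free $S$-modules, with $M$ as a syzygy. It remains to check acyclicity of the dual complex $T_\bullet^*$; by Auslander--Bridger theory applied to the Auslander transpose $\mathrm{Tr}(M)=\Coker(F_0^*\to F_1^*)$, this reduces to $\Ext^i_S(\mathrm{Tr}(M),S)=0$ for all $i\ge 1$. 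Vanishing for $i\ge 3$ follows from $\Ext^{\ge 1}_S(M^*,S)=\Ext^{\ge 1}_S(M,S)=0$, and $\Ext^1_S(\mathrm{Tr}(M),S)=0$ follows from torsionlessness of $M$ (inherited from $M\cong M^*\hookrightarrow F_0^*$).

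The main obstacle I expect is the remaining vanishing $\Ext^2_S(\mathrm{Tr}(M),S)=0$, which by the Auslander--Bridger exact sequence $0\to\Ext^1_S(\mathrm{Tr}(M),S)\to M\to M^{**}\to\Ext^2_S(\mathrm{Tr}(M),S)\to 0$ is equivalent to surjectivity of the natural map $\eta_M\colon M\to M^{**}$. This is where the self-duality $M\cong M^*$ must be used in an essential way, beyond mere torsionlessness. I would approach this by carrying out the $\Ext$ computation via the explicit free resolution of $\mathrm{Tr}(M)$ that splices $F_\bullet$ (now viewed as a resolution of both $M$ and $M^*$) with its dual, and exploiting the built-in symmetry to identify $\Coker\eta_M=0$. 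Once $T_\bullet^*$ is shown to be acyclic, $T_\bullet$ is a complete resolution of $M$, so $M$ is totally reflexive.
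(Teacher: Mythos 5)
Your reduction to the grade-zero case and your treatment of (2)$\Rightarrow$(1) both match the paper's argument. But the grade-zero heart of (1)$\Rightarrow$(2) is left with a genuine gap, and you say as much: the vanishing $\Ext^2_S(\mathrm{Tr}\,M,S)=0$ --- equivalently, surjectivity of the biduality map $\eta_M\colon M\to M^{**}$ --- is flagged as ``the main obstacle,'' and ``exploiting the built-in symmetry'' is a hope, not an argument. The difficulty is precisely the one you are circling: an abstract isomorphism $M\cong M^*\cong M^{**}$ does not, on its face, say anything about the \emph{natural} map $\eta_M$; your spliced resolution of $\mathrm{Tr}\,M$ is only \emph{isomorphic} to its shifted dual, not equal to it, so a naive symmetry argument recreates the very abstract-versus-natural ambiguity it is meant to resolve. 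The paper sidesteps all of this by invoking \cite[Proposition 1.1.9(b)]{C}, which asserts that a finitely generated module abstractly isomorphic to its double dual is reflexive; combined with $\Ext^{\ge1}_S(M^*,S)\cong\Ext^{\ge1}_S(M,S)=0$, this gives total reflexivity directly, with no complete-resolution machinery needed.

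If you want a self-contained finish for your route, here is the missing step. Since $M\cong M^*$, the module $M$ is (up to isomorphism) a dual, and biduality maps out of duals are split injective: one has the identity $(\eta_N)^*\circ\eta_{N^*}=\operatorname{id}_{N^*}$ for every $N$, and by naturality $\eta_M$ is isomorphic (as a map) to $\eta_{M^*}$, so $\eta_M$ is split injective. Hence $M^{**}\cong M\oplus C$ where $C=\Coker\eta_M$. Since $M\cong M^{**}$, this reads $M\cong M\oplus C$; localizing at each prime and comparing minimal numbers of generators forces $C=0$. Thus $\eta_M$ is an isomorphism, $\Ext^2_S(\mathrm{Tr}\,M,S)=0$, and your complete-resolution argument closes. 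This is longer than the paper's proof but has the side benefit of producing the complete resolution explicitly.
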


\begin{proof}
(1)$\Rightarrow$(2): By \cite[Theorem 1.2.7]{C}, it suffices to show that $M$ has finite G-dimension. Let $\x=x_1,\dots x_g$ be an $R$-sequence in $\Ann_RM$. Then $M$ is totally self-dual as an $R/(\x)$-module by Lemma \ref{ts property}. By virtue of \cite[Proposition 2.2.8]{C}, it is enough to show that $M$ is totally reflexive as an $R/(\x)$-module. Therefore, we may assume that $g=0$. Since $M^*=\Ext^0_R(M,R)\cong M$, we have $M\cong M^{**}$. Hence, $M$ is reflexive by \cite[Proposition 1.1.9 (b)]{C}. Moreover, we see that $\Ext^i_R(M^*,R)\cong\Ext^i_R(M,R)=0$ for all integers $i>0$. Thus, $M$ is totally reflexive.

(2)$\Rightarrow$(1): The implication is clear by \cite[Theorem 1.2.7]{C}.
\end{proof}

We close the section by giving some more examples of totally self-dual modules.

\begin{ex}\label{ex ts2}
\begin{enumerate}[\rm(1)]
\item
Let $f\in R$ be a regular element which is not a unit. Set $\overline{(-)}=(-)\otimes_RR/(f)$. A pair of square matrices $(\varphi,\psi)$ of the same size is called a {\em matrix factorization} of $f$ if $\varphi\psi=\psi\varphi=fE$, where $E$ is the identity matrix. For a matrix factorization $(\varphi,\psi)$ of $f$, there is a periodic exact sequence
\[\cdots\overset{\overline{\psi}}\to\overline{R}^{\oplus n}\overset{\overline{\varphi}}\to\overline{R}^{\oplus n}\overset{\overline{\psi}}\to\overline{R}^{\oplus n}\overset{\overline{\varphi}}\to\overline{R}^{\oplus n}\overset{\overline{\psi}}\to\cdots\]
of free $\overline{R}$-modules; see \cite[Chapter 7]{Y} for details. Put $M=\Coker\varphi\cong\Coker\overline{\varphi}$. For a matrix $A$, we denote its transpose by $A^T$. Since $(\varphi^T,\psi^T)$ is also a matrix factorization of $f$, the complex
\[\cdots\overset{\overline{\psi^T}}\to\overline{R}^{\oplus n}\overset{\overline{\varphi^T}}\to\overline{R}^{\oplus n}\overset{\overline{\psi^T}}\to\overline{R}^{\oplus n}\overset{\overline{\varphi^T}}\to\overline{R}^{\oplus n}\overset{\overline{\psi^T}}\to\cdots\]
is also exact. This means that $M$ is a totally reflexive $\overline{R}$-module. Moreover, $M$ is a perfect $R$-module of grade $1$ with $\Ext_R^1(M,R)\cong\Hom_{\overline{R}}(M,\overline{R})\cong \Coker\overline{\varphi^T}\cong\Coker\varphi^T$. Hence, $M$ is totally self-dual as an $R$-module (or as an $\overline{R}$-module) if and only if $\Coker\varphi^T$ is isomorphic to $M$. This condition is satisfied if $\varphi^T$ is equivalent to $\varphi$. The converse holds if $R$ is local by the uniqueness of a minimal free resolution. Here, we say that two matrices $\varphi,\psi$ of the same size are {\em equivalent} if there exist invertible matrices $\alpha,\beta$ such that $\alpha\varphi=\psi\beta$.
\item 
Let $x$ be an element of $R$ such that $\underset{R}{(0:x)}=(x)$. Then there is an exact sequence
\[\cdots\overset{x}\to R\overset{x}\to R\overset{x}\to R\overset{x}\to\cdots.\]
By the same argument as (1), $R/(x)$ is a totally self-dual $R$-module.
\end{enumerate}
\end{ex}

\section{Proof of main results}

In this section, we prove our main results. Throughout this section, let $(R,\m,k)$ be a local ring. First, we show the relationship between totally self-dual modules and the self-duality of minimal free resolutions.

\begin{prop}\label{prop mfr sd ts}
Let $M$ be a nonzero $R$-module of finite projective dimension and $F$ a minimal free resolution of $M$. Then the following conditions are equivalent.
\begin{enumerate}[\rm(1)]
\item 
$F$ is self-dual.
\item 
$M$ is totally self-dual.
\end{enumerate}
\end{prop}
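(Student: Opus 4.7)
My plan is to compare $F$ with $F^*$ position-by-position and translate the relation $F\cong F^*$ into a statement about the modules $\Ext^i_R(M,R)$. Set $r=\pd_R M$, so $F_i=0$ for $i>r$ while $F_r\ne 0$. Both $F$ and $F^*$ occupy exactly $r+1$ consecutive positions, so any isomorphism of complexes between them must match $F_i$ with $F_{r-i}^*$, identifying $d_{r-i+1}$ with $d_i^*$.

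For $(1)\Rightarrow(2)$, under such a matching the cohomology of $F^*$ at position $i$, namely $\Ext^i_R(M,R)$, coincides with the homology of $F$ at position $r-i$. Since $F$ is a resolution of $M$, its only nonzero homology is $H_0(F)=M$, so $\Ext^r_R(M,R)\cong M$ and $\Ext^i_R(M,R)=0$ for $i\ne r$. Consequently $\grade_R M=r$, and the total self-duality condition holds with $g=r$.

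For $(2)\Rightarrow(1)$, I first verify that $M$ is perfect. Minimality of $F$ gives $\Im d_r\subseteq\m F_{r-1}$; dualizing, $\Im d_r^*\subseteq\m F_r^*$, so $\Ext^r_R(M,R)=F_r^*/\Im d_r^*\ne 0$ by Nakayama. Combined with $\Ext^i_R(M,R)=0$ for $i\ne g$ this forces $r=g$. Reindex the dual as the chain complex $G_\bullet:=F_{r-\bullet}^*$ with differential $d_i^G=d_{r-i+1}^*$. Its only nonzero homology is $H_0(G)=\Ext^g_R(M,R)\cong M$, so $G$ is a free resolution of $M$, and its minimality follows by the same Nakayama observation applied to each $d_i^*$ (the matrix of $d_i^*$ is the transpose of the matrix of $d_i$, hence still has entries in $\m$). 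Uniqueness of minimal free resolutions over a local ring then yields $F\cong G$, which is precisely $F\cong F^*$.

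The main technical point, rather than a genuine obstacle, is verifying that $\Hom_R(-,R)$ preserves minimality of a free complex over a local ring; this is where Nakayama (and hence the local hypothesis) enters essentially. The rest of the argument is careful indexing together with the uniqueness of minimal free resolutions.
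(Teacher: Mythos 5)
Your proof is correct and follows essentially the same route as the paper's: in both directions the key points are that $\Ext^i_R(M,R)$ is computed as the cohomology of $F^*$, that dualization preserves minimality because transposing a matrix with entries in $\m$ keeps them in $\m$, and that uniqueness of minimal free resolutions then gives $F\cong F^*$. You spell out the step $\pd_R M=g$ via Nakayama more explicitly than the paper does, but this is the same underlying argument.
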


\begin{proof}
Put $n=\pd_RM$ and $F=(0\to F_n\overset{d_n}\to\cdots\overset{d_1}\to F_0\to0)$.

(1)$\Rightarrow$(2): Since the sequence
\[0\to F_0^*\overset{d_1^*}\to\cdots\overset{d_n^*}\to F_n^*\to M\to0\]
is exact, we have
\[
\Ext^i_R(M,R)\cong
\begin{cases}
M & \text{if $i=g$},\\
0 & \text{if $i\ne g$}.
\end{cases}
\]
Therefore, $M$ is totally self-dual.

(2)$\Rightarrow$(1): By the definition of a totally self-dual module, the sequence
\[0\to F_0^*\overset{d_1^*}\to\cdots\overset{d_n^*}\to F_n^*\to M\to0\]
is exact. Since $\Im d_i\subset\m F_{i-1}$, we get $\Im d_i^*\subset\m F_i^*$. Hence, $F^*=(0\to F_0^*\overset{d_1^*}\to\cdots\overset{d_n^*}\to F_n^*\to0)$ is a minimal free resolution of $M$. By the uniqueness of a minimal free resolution, $F^*$ is isomorphic to $F$.
\end{proof}

\begin{ex}\label{ex mfr sd}
\begin{enumerate}[\rm(1)]
\item
Let $\x=x_1,\dots,x_n$ be an $R$-regular sequence. Then the Koszul complex $\K(\x)$ of $\x$ is a minimal free resolution of $R/(\x)$. By \cite[Proposition 1.6.10]{BH}, $\K(\x)$ is self-dual. This also follows from Lemma \ref{ts property}(4) and Proposition \ref{prop mfr sd ts}.
\item 
The local ring $R$ is regular if and only if the minimal free resolution of $k$ is self-dual by Example \ref{ex ts}(2) and Proposition \ref{prop mfr sd ts}. This also follows from (1).
\end{enumerate}
\end{ex}

Next, we show that syzygies of modules with self-dual minimal free resolutions satisfy certain isomorphisms.

\begin{prop}\label{prop mrf sd syz}
Let $M$ be a nonzero $R$-module of grade $g$. If a minimal free resolution of $M$ is self-dual, then one has $(\Omega^i_RM)^*\cong\Omega^{g+1-i}_RM$ for all integers $0\le i\le g-1$.
\end{prop}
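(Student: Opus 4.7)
The plan is to invoke Proposition~\ref{prop mfr sd ts} so that $M$ is totally self-dual and $F^*$, with indices reversed, becomes a second minimal free resolution of $M$. The required isomorphism then comes from identifying $(\Omega_R^i M)^*$ as the kernel of a differential in $F^*$ and recognizing that kernel as a syzygy computed from this second resolution, which by uniqueness must agree with the one computed from $F$ itself.

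More concretely, I would first apply Proposition~\ref{prop mfr sd ts} to get $\grade M = g = \pd_R M$ together with the exact sequence
\[
0 \to F_0^* \xrightarrow{d_1^*} F_1^* \xrightarrow{d_2^*} \cdots \xrightarrow{d_g^*} F_g^* \to M \to 0,
\]
and note that each $d_i^*$ has image inside $\m F_i^*$, since its matrix is the transpose of that of $d_i$. Re-indexing by $G_j := F_{g-j}^*$ with differential $d_j^G := d_{g-j+1}^*$ therefore yields a minimal free resolution $G$ of $M$, so by the uniqueness of minimal free resolutions $\Omega_G^k M \cong \Omega_R^k M$ for every $k \ge 0$.

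Next, for fixed $0 \le i \le g-1$, truncating $F$ produces a minimal free resolution of length $g-i$ of $\Omega_R^i M$. Applying $(-)^*$ to this truncation and using left-exactness of $\Hom(-,R)$ gives
\[
(\Omega_R^i M)^* \;\cong\; \Ker\bigl(d_{i+1}^* \colon F_i^* \to F_{i+1}^*\bigr).
\]
Under the re-indexing above, this differential is exactly $d_{g-i}^G \colon G_{g-i} \to G_{g-i-1}$, and the exactness of $G$ at $G_{g-i}$ and $G_{g-i+1}$ (valid since $g-i \ge 1$ and $g-i+1 \ge 2$) identifies $\Ker(d_{g-i}^G)$ with the $(g+1-i)$-th syzygy $\Omega_G^{g+1-i} M$. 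Chaining these isomorphisms gives $(\Omega_R^i M)^* \cong \Omega_R^{g+1-i} M$, as desired.

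The main obstacle is purely bookkeeping: keeping the two opposite indexings of $F$ and $F^*$ straight. The boundary case $i = 0$ poses no issue, since both $M^*$ and $\Omega_R^{g+1} M$ vanish, the former because $\grade M = g \ge 1$, the latter because $\pd_R M = g$.
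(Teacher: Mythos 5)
Your argument is correct and follows essentially the same route as the paper: establish that the reversed dual complex $F^*$ is a second minimal free resolution of $M$, identify $(\Omega_R^i M)^*$ with $\Ker(d_{i+1}^*)$ by left-exactness of $\Hom_R(-,R)$, and then recognize that kernel as the $(g{+}1{-}i)$-th syzygy read off from the dual resolution. The only (harmless) difference is that you route through Proposition~\ref{prop mfr sd ts} and total self-duality to see that $F^*$ reversed is a minimal free resolution, whereas the paper reads this off directly from the definition of self-duality ($F\cong F^*$ reversed).
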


\begin{proof}
Let $0\to F_g\overset{d_g}\to\cdots\overset{d_2}\to F_1\overset{d_1}\to F_0\to0$ be a minimal free resolution of $M$. Then the $R$-dual complex $0\to F_0^*\overset{d_1^*}\to F_1^*\overset{d_2^*}\to\cdots\overset{d_g^*}\to F_g^*\to0$ is also a minimal free resolution of $M$. Therefore, for each integer $0\le i\le g-1$, there are isomorphisms $(\Omega^i_RM)^*\cong(\Coker d_{i+1})^*\cong\Ker(d_{i+1}^*)\cong\Omega^{g+1-i}_RM$.
\end{proof}

We consider whether the converse of the above proposition holds. More precisely, we raise the following question.

\begin{ques}\label{ques}
Let $M$ be a nonzero $R$-module of grade $g$ and $0\le i\le g-1$ an integer. Does the existence of an isomorphism $(\Omega^i_RM)^*\cong\Omega^{g+1-i}_RM$ imply the self-duality of a minimal free resolution of $M$?
\end{ques}

The following theorem shows that Question \ref{ques} is affirmative for all integers $2\le i\le g-1$.

\begin{thm}\label{thm mfr sd syz}
Let $M$ be a nonzero $R$-module of grade $g$. Suppose that there exist two integers $0\le m,n<g$ such that $(\Omega^m_RM)^*\cong\Omega^n_RM$. Then one has that $m,n\ge2$, that $g=\pd_RM=m+n-1$, and that a minimal free resolution of $M$ is self-dual.
\end{thm}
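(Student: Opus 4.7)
The plan is to produce, for each $1\le j\le g-1$, a minimal free resolution of $(\Omega^j_R M)^*$ by splicing the duals of the standard syzygy short exact sequences, and then to read off $\Ext^i_R((\Omega^m_R M)^*,R)$ by dualizing this resolution once more. Concretely, applying $\Hom_R(-,R)$ to $0\to\Omega^{i+1}_R M\to F_i\to\Omega^i_R M\to 0$ and using $\Ext^1_R(\Omega^i_R M,R)\cong\Ext^{i+1}_R(M,R)=0$ for $i\le g-2$ gives short exact sequences $0\to(\Omega^i_R M)^*\to F_i^*\to(\Omega^{i+1}_R M)^*\to 0$. Splicing these with $M^*=0$ (which holds since $g\ge 1$) yields, for each $1\le j\le g-1$, an exact sequence
\[ 0\to F_0^*\overset{d_1^*}\to F_1^*\overset{d_2^*}\to\cdots\overset{d_{j-1}^*}\to F_{j-1}^*\to(\Omega^j_R M)^*\to 0, \]
which is a minimal free resolution of $(\Omega^j_R M)^*$ because the $d_i$ have entries in $\m$. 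In particular, $\pd_R(\Omega^j_R M)^*=j-1$.

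Using this I rule out $m\le 1$ and $n\le 1$. Since $\Ext^g_R(M,R)\ne 0$ forces $\pd_R M\ge g$, the syzygy $\Omega^i_R M$ is nonzero for every $i<g$. If $m=0$, then $M^*=0$ contradicts $\Omega^n_R M\ne 0$; if $m=1$, then $(\Omega^1_R M)^*\cong F_0^*$ is a nonzero free module, forcing $\Omega^n_R M$ to be free and hence $\pd_R M\le n<g$, absurd. For $n$, I compare projective dimensions using the hypothesis: if $n=0$ then $\pd_R M=\pd_R(\Omega^m_R M)^*=m-1<g$, and if $n=1$ then $\pd_R M=1+\pd_R\Omega^1_R M=m<g$, both impossible.

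Next I dualize the minimal free resolution of $(\Omega^m_R M)^*$ constructed above to obtain the cochain complex
\[ 0\to F_{m-1}\overset{d_{m-1}}\to F_{m-2}\overset{d_{m-2}}\to\cdots\overset{d_1}\to F_0\to 0, \]
a truncation of $F$ itself, whose cohomology computes $\Ext^i_R((\Omega^m_R M)^*,R)$. Exactness of $F$ at the interior positions forces the middle cohomologies to vanish, while $H^0=\Ker d_{m-1}=\Omega^m_R M$ and $H^{m-1}=F_0/\Im d_1=M$; also $H^i=0$ for $i\ge m$ by the projective dimension bound. Substituting $(\Omega^m_R M)^*\cong\Omega^n_R M$ and using $\Ext^i_R(\Omega^n_R M,R)\cong\Ext^{i+n}_R(M,R)$ for $i\ge 1$, I obtain $\Ext^j_R(M,R)=0$ for $n+1\le j\le n+m-2$, $\Ext^{n+m-1}_R(M,R)\cong M$, and $\Ext^j_R(M,R)=0$ for $j\ge n+m$.

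Combined with $\Ext^j_R(M,R)=0$ for $j<g$ and $\Ext^g_R(M,R)\ne 0$, the relation $g>n$ excludes $g$ from the vanishing interval $\{n+1,\ldots,n+m-2\}$, so $g\ge n+m-1$; conversely $\pd_R M\le n+m-1$ together with $\pd_R M\ge g$ yields $g=m+n-1=\pd_R M$. Hence $\Ext^j_R(M,R)$ vanishes for $j\ne g$ and $\Ext^g_R(M,R)\cong M$, so $M$ is totally self-dual and the minimal free resolution of $M$ is self-dual by Proposition \ref{prop mfr sd ts}. The main delicate step is the identification of the two nontrivial cohomology groups $\Omega^m_R M$ and $M$ at opposite ends of the dualized resolution; everything else is index arithmetic.
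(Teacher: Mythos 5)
Your proof is correct, and it takes a genuinely different route from the paper's. The paper splices the truncated dual complex $0\to F_0^*\to\cdots\to F_{m-1}^*\to(\Omega^m_R M)^*\to 0$ onto the tail $0\to\Omega^n_R M\to F_{n-1}\to\cdots\to F_0\to M\to 0$ to conclude $\pd_R M=m+n-1$ and $m,n\ge 2$ immediately, then cites a reflexivity theorem of Ma\c{s}ek for $\Omega^m_R M$, uses uniqueness of minimal free resolutions to identify the two resolutions of $\Omega^n_R M$, dualizes that identification, and (after reducing to $m\ge n$) assembles an explicit chain isomorphism $F\cong F^*$, from which $g=\pd_R M$ falls out. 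You instead dualize the minimal free resolution of $(\Omega^m_R M)^*$ once more, recognize the result as a truncation of $F$ itself, and translate its cohomology via $\Ext^i_R(\Omega^n_R M,R)\cong\Ext^{i+n}_R(M,R)$ to show $\Ext^j_R(M,R)$ is concentrated in degree $m+n-1$ where it equals $M$; that is, $M$ is totally self-dual, and self-duality of $F$ then follows from Proposition \ref{prop mfr sd ts}. Your route avoids the reflexivity citation and the chain-isomorphism bookkeeping by delegating the final step to the already-proved equivalence between total self-duality and self-duality of the resolution; the paper's route produces the isomorphism $F\cong F^*$ directly but leans on external input. Two small expository points, neither a real gap: the sentence ``the relation $g>n$ excludes $g$ from the vanishing interval'' has the logic slightly backwards --- it is $\Ext^g_R(M,R)\ne 0$ that excludes $g$ from the vanishing set, and then $g>n$ forces $g\ge n+m-1$ --- and the inequality $\pd_R M\le n+m-1$ (together with finiteness of $\pd_R M$, needed for $\pd_R M\ge g$) deserves a word of justification, most cleanly via $\pd_R M=n+\pd_R\Omega^n_R M=n+\pd_R(\Omega^m_R M)^*=n+m-1$, which uses the computation from your first paragraph.
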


\begin{proof}
Let $F=(\cdots\overset{d_2}\to F_1\overset{d_1}\to F_0\to0)$ be a minimal free resolution of $M$. Since $\Ext_R^i(M,R)=0$ for every integer $0\le i\le m$, the sequence
\[0\to F_0^*\to\cdots\to F_{m-1}^*\to(\Omega_R^mM)^*\to0\]
is exact. Splicing this exact sequence with the exact sequence
\[0\to\Omega_R^nM\to F_{n-1}\to\cdots\to F_0\to M\to0,\]
we obtain an exact sequence
\[0\to F_0^*\to\cdots\to F_{m-1}^*\to F_{n-1}\to\cdots\to F_0\to M\to0.\]
This is a minimal free resolution of $M$. Hence, the projective dimension of $M$ is $m+n-1$. Since $m,n<g\le\pd_RM= m+n-1$, we have $m,n\ge2$. By \cite[Theorem 43]{Mas}, $\Omega_R^mM$ is reflexive. Therefore, $(\Omega_R^nM)^*\cong((\Omega_R^mM)^*)^*\cong\Omega_R^mM$. We may assume that $m\ge n$. By the uniqueness of a minimal free resolution, there is an isomorphism of exact sequences
\[(0\to F_0^*\to\cdots\to F_{m-1}^*\to(\Omega_R^mM)^*\to0)\cong(0\to F_{m+n-1}\to\cdots\to F_n\to\Omega_R^nM\to0).\]
Dualizing this isomorphism, we obtain an isomorphism of exact sequences
\[(0\to\Omega_R^mM\to F_{m-1}\to\cdots\to F_0)\cong(0\to(\Omega_R^nM)^*\to F_{n}^*\to\cdots\to F_{m+n-1}^*).\]
Since $m\ge n$, by combining the above two isomorphisms of exact sequences, we conclude that $F$ is isomorphic to the complex $(0\to F_0^*\overset{d_1^*}\to\cdots\overset{d_n^*}\to F_n^*\to0)$ and that $g=\pd_RM$.
\end{proof}

\begin{proof}[Proof of Corollary \ref{dey sd}]
The assertion immediately follows from Example \ref{ex mfr sd}(2) and Theorem \ref{thm mfr sd syz}.
\end{proof}

\begin{rem}
Question \ref{ques} is negative if $i=0$ or $i=1$. We have counterexamples in each case.

(The case $i=0$) Let $S=k\llbracket x,y\rrbracket$ be the formal power series ring over a field $k$. Consider the matrices
\[\varphi=
\begin{pmatrix}
x^2 & -y\\
0 & x
\end{pmatrix},
\quad\psi=
\begin{pmatrix}
x & y\\
0 & x^2
\end{pmatrix}
.\]
Then $(\varphi,\psi)$ is a matrix factorization of $x^3$, $\Coker\varphi\cong(x,y)/(x^3)$, and $\Coker\psi\cong(x^2,y)/(x^3)$. Put $R=S/(x^3)$, $M=(x,y)/(x^3)$, and $N=(x^2,y)/(x^3)$. By elementary operations, we see that the transpose of $\varphi$ is equivalent to $\psi$. Hence, $\Hom_R(M,R)\cong N$. From Example \ref{ex ts2}(1), $M$ is a perfect $S$-module of grade $1$ with $\Ext_S^1(M,S)\cong N$. Since $(\Omega_S^0M)^*=\Ext_S^0(M,S)=0=\Omega_S^2M=\Omega_S^{1+1-0}M$, $M$ satisfies the assumption of Question \ref{ques}. Assume that $M$ is a totally self-dual $S$-module. Then $M\cong N$. Taking the tensor products with $S/(x^2,y^2)$, we obtain exact sequences
\[\begin{tikzcd}[ampersand replacement=\&]
(S/(x^2,y^2))^{\oplus2}\ar[r, "{\begin{pmatrix}0&-y\\0&x\end{pmatrix}}"] \& (S/(x^2,y^2))^{\oplus2}\ar[r] \& M/(x^2,y^2)M\ar[r] \& 0
\end{tikzcd},\]
\[\begin{tikzcd}[ampersand replacement=\&]
(S/(x^2,y^2))^{\oplus2}\ar[r, "{\begin{pmatrix}x&y\\0&0\end{pmatrix}}"] \& (S/(x^2,y^2))^{\oplus2}\ar[r] \& N/(x^2,y^2)N\ar[r] \& 0
\end{tikzcd}.\]
Hence, $N/(x^2,y^2)N\cong k\oplus S/(x^2,y^2)$. On the other hand, $M/(x^2,y^2)M$ has no $S/(x^2,y^2)$-free summand, since $xy$ kills $(M/(x^2,y^2)M)$. This contradicts the assumption that $M$ is isomorphic to $N$. Thus, $M$ is not totally self-dual as an $S$-module.

(The case $i=1$) We use the same notation as above. Set $R'=R/(y^2)=S/(x^3,y^2)$, $M'=M/y^2M$, and $N'=N/y^2N$. Since $y^2$ is an $R$-regular element, the sequences
\[\cdots\overset{\psi\otimes_SR'}\longrightarrow R'^{\oplus 2}\overset{\varphi\otimes_SR'}\longrightarrow R'^{\oplus 2}\overset{\psi\otimes_SR'}\longrightarrow R'^{\oplus 2}\overset{\varphi\otimes_SR'}\longrightarrow R'^{\oplus 2}\overset{\psi\otimes_SR'}\longrightarrow\cdots,\]
\[\cdots\overset{\psi^T\otimes_SR'}\longrightarrow R'^{\oplus 2}\overset{\varphi^T\otimes_SR'}\longrightarrow R'^{\oplus 2}\overset{\psi^T\otimes_SR'}\longrightarrow R'^{\oplus 2}\overset{\varphi^T\otimes_SR'}\longrightarrow R'^{\oplus 2}\overset{\psi^T\otimes_SR'}\longrightarrow\cdots,\]
are exact, $\Coker(\varphi\otimes_SR')\cong M'$, and $\Coker(\psi\otimes_SR')\cong N'$. Hence $M'$ is a perfect $S$-module of grade $2$ with $\Ext_S^2(M',S)\cong\Hom_{R'}(M',R')\cong N'$. Let $0\to S^{\oplus c}\to S^{\oplus b}\to S^{\oplus a}\to0$ be a minimal $S$-free resolution of $M'$. Then $a=\mu_S(M')=\mu_{R'}(M')=2$ and $c=\mu_S(\Ext_S^2(M',S))=\mu_{R'}(N')=2$. Here, for a module $L$ over a local ring $T$, $\mu_T(L)$ denotes the minimal number of generators of $L$. As $(\Omega_S^1M')^*\cong S^{\oplus a}=S^{\oplus2}=S^{\oplus c}\cong\Omega_S^2M'=\Omega_S^{2+1-1}M'$, $M'$ satisfies the assumption of Question \ref{ques}. Since $M/(x^2,y^2)M\ncong N/(x^2,y^2)N$, $M'$ is not isomorphic to $N'$. Therefore, $M'$ is not totally self-dual as an $S$-module.
\end{rem}

Next, we consider the eventual periodicity of minimal free resolutions of G-perfect modules. The following lemma is the key to prove our main theorem.

\begin{lem}\label{lem dual st iso}
Let $M$ be a nonzero $R$-module of grade $g$ such that $\Ext_R^i(M,R)=0$ for all nonnegative integers $i\ne g$, and set $N=\Ext_R^g(M,R)$. Then one has $\Omega_R^gN\approx\Omega_R^i((\Omega_R^iM)^*)$ for all integers $i\ge g$.
\end{lem}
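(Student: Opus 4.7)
The plan is to compare $\Omega_R^gN$ with $\Omega_R^i((\Omega_R^iM)^*)$ by working with the dualized minimal free resolution of $M$. Let $F_\bullet$ be a minimal free resolution of $M$. For each $j\ge 1$, dualizing the short exact sequence $0\to\Omega_R^jM\to F_{j-1}\to\Omega_R^{j-1}M\to 0$ and using that $F_{j-1}$ is free yields the exact sequence
\[
0\to(\Omega_R^{j-1}M)^*\to F_{j-1}^*\to(\Omega_R^jM)^*\to\Ext_R^j(M,R)\to 0.
\]
By hypothesis the rightmost term vanishes whenever $j\ne g$, and in that case $(\Omega_R^{j-1}M)^*$ is a first syzygy of $(\Omega_R^jM)^*$, so $\Omega_R((\Omega_R^jM)^*)\approx(\Omega_R^{j-1}M)^*$. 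Iterating this equivalence along $j=i,i-1,\dots,g+1$ for any $i\ge g$ gives $\Omega_R^{i-g}((\Omega_R^iM)^*)\approx(\Omega_R^gM)^*$, and applying $\Omega_R^g$ yields $\Omega_R^i((\Omega_R^iM)^*)\approx\Omega_R^g((\Omega_R^gM)^*)$.

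The problem thereby reduces to showing $\Omega_R^gN\approx\Omega_R^g((\Omega_R^gM)^*)$. The case $g=0$ is immediate since $(\Omega_R^0M)^*=M^*=N$. For $g\ge 1$, let $L=\Im(F_{g-1}^*\to(\Omega_R^gM)^*)$ and split the $j=g$ instance of the above sequence into
\[
0\to(\Omega_R^{g-1}M)^*\to F_{g-1}^*\to L\to 0\qquad\text{and}\qquad 0\to L\to(\Omega_R^gM)^*\to N\to 0.
\]
Exactness of the dual complex $F_\bullet^*$ at positions $0,1,\dots,g-1$ (using $M^*=0$ together with $\Ext_R^j(M,R)=0$ for $1\le j<g$) exhibits $0\to F_0^*\to F_1^*\to\cdots\to F_{g-1}^*\to L\to 0$ as a free resolution of $L$ of length $g-1$, so $\pd_RL\le g-1$.

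Finally, I would apply the horseshoe lemma to $0\to L\to(\Omega_R^gM)^*\to N\to 0$ using the minimal free resolutions of $L$ and $N$. Since $\pd_RL\le g-1$, the $g$-th syzygy of $L$ is zero, so the induced short exact sequence of $g$-th syzygies collapses to an isomorphism between the $g$-th syzygy of $(\Omega_R^gM)^*$ in the (possibly non-minimal) horseshoe resolution and $\Omega_R^gN$. Since any two free resolutions of the same module produce syzygies agreeing up to free summands, this gives $\Omega_R^g((\Omega_R^gM)^*)\approx\Omega_R^gN$, as desired.

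The main technical delicacy is the horseshoe step: we truly need the $g$-th syzygy contribution from $L$ to be exactly zero, not merely projective, since a short exact sequence with a projective (but nonzero) kernel does not automatically split. The bound $\pd_RL\le g-1$ coming from the exact dual complex is precisely what ensures this.
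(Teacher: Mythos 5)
Your proof is correct and takes essentially the same route as the paper's: both dualize the minimal free resolution of $M$, isolate the image $L$ of $F_{g-1}^*\to(\Omega_R^gM)^*$, observe that the vanishing of $\Ext_R^j(M,R)$ for $0\le j<g$ forces $\pd_RL\le g-1$ (hence $\Omega_R^gL=0$), apply the horseshoe lemma to $0\to L\to(\Omega_R^gM)^*\to N\to0$ to get $\Omega_R^g((\Omega_R^gM)^*)\approx\Omega_R^gN$, and then use the exactness of the dual complex in degrees $g,\dots,i-1$ to identify $\Omega_R^{i-g}((\Omega_R^iM)^*)$ with $(\Omega_R^gM)^*$ up to free summands. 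The only stylistic difference is that you iterate the one-step short exact sequences $0\to(\Omega_R^{j-1}M)^*\to F_{j-1}^*\to(\Omega_R^jM)^*\to0$ whereas the paper works directly with the two spliced long exact sequences, but this is the same content; the paper in fact gets a genuine isomorphism $\Omega_R^{i-g}((\Omega_R^iM)^*)\cong(\Omega_R^gM)^*$ there since the dualized minimal resolution remains minimal, while your $\approx$ is a slightly weaker but equally sufficient statement.
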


\begin{proof}
Let $F=(\cdots\overset{d_2}\to F_1\overset{d_1}\to F_0\to0)$ be a minimal free resolution of $M$. By assumption, there are two exact sequences
\begin{equation}\label{ex seq lem 1}
0\to F_0^*\overset{d_1^*}\to\cdots\overset{d_{g-1}^*}\to F_{g-1}^*\to(\Omega_R^gM)^*\to N\to0,
\end{equation}
\begin{equation}\label{ex seq lem 2}
0\to(\Omega_R^gM)^*\to F_g^*\overset{d_{g+1}^*}\to\cdots\overset{d_{i-1}^*}\to F_{i-1}^*\to(\Omega_R^iM)^*\to0.
\end{equation}
Splitting \eqref{ex seq lem 1} into the two exact sequences $0\to F_0^*\overset{d_1^*}\to\cdots\overset{d_{g-1}^*}\to F_{g-1}^*\to L\to0$ and $0\to L\to(\Omega_R^gM)^*\to N\to0$, we have $\Omega_R^gL=0$. By the horseshoe lemma, we obtain a short exact sequence $0\to\Omega_R^gL\to\Omega_R^g((\Omega_R^gM)^*)\oplus R^{\oplus a}\to\Omega_R^gN\to0$ with $a\ge0$. We get $\Omega_R^g((\Omega_R^gM)^*)\approx\Omega_R^gN$.

By \eqref{ex seq lem 2}, we have $\Omega_R^{i-g}((\Omega_R^iM)^*)\cong(\Omega_R^gM)^*$. It follows that $\Omega_R^i((\Omega_R^iM)^*)\cong\Omega_R^g(\Omega_R^{i-g}((\Omega_R^iM)^*))\cong\Omega_R^g((\Omega_R^gM)^*)\approx\Omega_R^gN$.
\end{proof}

The following theorem shows that syzygies of G-perfect modules with eventually periodic minimal free resolutions satisfy a certain duality.

\begin{thm}\label{thm mfr ep}
Let $M$ be a G-perfect $R$-module of grade $g$, $F$ a minimal free resolution of $M$, and set $N=\Ext_R^g(M,R)$. Suppose that $F$ is eventually periodic with period $a>0$. Then one has $(\Omega_R^iN)^*\approx\Omega_R^{g+ma-i}M$ for all integers $i\ge g$ and $m\ge\frac{i}{a}$.
\end{thm}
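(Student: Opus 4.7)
The plan is to apply Lemma \ref{lem dual st iso} to $N$, exploit the eventual periodicity of $F$ to translate between syzygies of $M$ at different positions, and finally cancel a common syzygy shift using the invertibility of $\Omega_R$ on totally reflexive modules.

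First, observe that by Proposition \ref{gperf duality} the module $N$ belongs to $\M_g(R)$ and satisfies $\Ext_R^g(N,R)\cong M$. Applying Lemma \ref{lem dual st iso} with $N$ in place of $M$ thus yields
\[
\Omega_R^g M\approx\Omega_R^i\bigl((\Omega_R^i N)^*\bigr)
\]
for every integer $i\ge g$.

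Next, I would use eventual periodicity to show that $\Omega_R^g M\approx\Omega_R^{g+ma}M$ for every $m\ge 0$. By assumption there exists $n\ge 0$ with $\Omega_R^n M\cong\Omega_R^{n+a}M$, and so $\Omega_R^j M\cong\Omega_R^{j+a}M$ for every $j\ge\max(n,g)$ by taking $(j-n)$-th syzygies. When $g\le j<n$, the module $\Omega_R^j M$ is still totally reflexive since $M$ is G-perfect of grade $g$; consequently its cosyzygy is well-defined up to a free summand (obtained by dualizing the minimal free resolution of $(\Omega_R^j M)^*$), and iterating this cosyzygy construction downwards propagates the isomorphism to $\Omega_R^j M\approx\Omega_R^{j+a}M$ for every $j\ge g$. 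Induction on $m$ now gives $\Omega_R^g M\approx\Omega_R^{g+ma}M$. Finally, for $m\ge i/a$ we have $g+ma-i\ge g$, so that $\Omega_R^{g+ma}M=\Omega_R^i(\Omega_R^{g+ma-i}M)$ when computed from the minimal free resolution. Combining the preceding relations yields
\[
\Omega_R^i\bigl((\Omega_R^i N)^*\bigr)\approx\Omega_R^i\bigl(\Omega_R^{g+ma-i}M\bigr).
\]
Both $(\Omega_R^i N)^*$ and $\Omega_R^{g+ma-i}M$ are totally reflexive (the former because $\Omega_R^i N$ is totally reflexive for $i\ge g$ and the $R$-dual preserves total reflexivity; the latter because $g+ma-i\ge g$). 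Cancelling $\Omega_R^i$ on both sides, one concludes $(\Omega_R^i N)^*\approx\Omega_R^{g+ma-i}M$.

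The main obstacle is this final cancellation, which together with the backward propagation of periodicity relies on the elementary fact that for totally reflexive modules $X,Y$ over a local ring, $\Omega_R X\approx\Omega_R Y$ implies $X\approx Y$. This in turn follows from the existence of cosyzygies: for totally reflexive $X$ one has $X\cong X^{**}$, and dualizing a minimal free resolution of $X^*$ (which stays exact by the vanishing $\Ext_R^i(X^*,R)=0$ for $i>0$) produces a cosyzygy $\Omega_R^{-1}X$ well-defined up to a free summand; iterating $i$ times gives the cancellation.
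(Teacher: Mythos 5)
Your proof is correct and follows essentially the same route as the paper's: apply Lemma \ref{lem dual st iso} to $N$ via Proposition \ref{gperf duality}, invoke eventual periodicity, and then cancel syzygies of totally reflexive modules (which is exactly the content of the cited \cite[Lemma~3.6]{D}). The only organizational difference is that you propagate the periodicity isomorphism backward from the periodicity threshold $n$ down to $g$ (via cosyzygies) before cancelling $\Omega_R^i$, whereas the paper avoids backward propagation by applying $\Omega_R^n$ to the relation $\Omega_R^i((\Omega_R^iN)^*)\approx\Omega_R^gM$ and then cancelling $\Omega_R^{n+i}$ in a single step.
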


\begin{proof}
By assumption, there exists an integer $n\ge0$ such that $\Omega_R^nM\cong\Omega_R^{n+a}M$. It follows from Lemma \ref{gperf duality} and Lemma \ref{lem dual st iso} that $\Omega_R^i((\Omega_R^iN)^*)\approx\Omega_R^gM$. Therefore, we have $\Omega_R^{n+i}((\Omega_R^iN)^*)\approx\Omega_R^{n+g}M\cong\Omega_R^{n+g+ma}M\cong\Omega_R^{n+i}(\Omega_R^{g+ma-i}M)$. Since $\Gdim_RM=\Gdim_RN=g$ and $ma\ge i\ge g$, the $R$-modules $(\Omega_R^iN)^*$ and $\Omega_R^{g+ma-i}M$ are totally reflexive. By \cite[Lemma 3.6]{D}, we get $(\Omega_R^iN)^*\approx\Omega_R^{g+ma-i}M$.
\end{proof}

We consider the converse of the above theorem. More precisely, we prove the following.

\begin{thm}\label{thm mfr ep2}
Let $M$ be a G-perfect $R$-module of grade $g$, $F$ a minimal free resolution of $M$, and set $N=\Ext_R^g(M,R)$. Suppose that there exist two integers $i\ge g$ and $j\ge0$ such that $i+j>g$ and $(\Omega_R^iN)^*\approx\Omega_R^jM$. Then $F$ is eventually periodic with period $i+j-g$.
\end{thm}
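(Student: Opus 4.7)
The plan is to reverse the logic of Theorem \ref{thm mfr ep}: I will apply Lemma \ref{lem dual st iso} to $N$ (using Proposition \ref{gperf duality}) and then chain the result with the given stable isomorphism to extract periodicity for the syzygies of $M$.

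First, Proposition \ref{gperf duality} says that $N\in\M_g(R)$ and $\Ext_R^g(N,R)\cong M$; in particular $\Ext_R^i(N,R)=0$ for $i\ne g$. Applying Lemma \ref{lem dual st iso} to $N$ therefore yields
\[
\Omega_R^g M\approx\Omega_R^i\bigl((\Omega_R^i N)^*\bigr)\qquad\text{for every } i\ge g.
\]

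Second, I would substitute the hypothesis $(\Omega_R^i N)^*\approx\Omega_R^j M$ into the right-hand side. Since minimal free resolutions split over direct sums and $\Omega_R^k$ of a free module vanishes for $k\ge 1$, the relation $\approx$ is preserved by any $\Omega_R^k$ and is upgraded to $\cong$ as soon as $k\ge 1$; moreover a truncation of the minimal free resolution of $M$ gives $\Omega_R^i(\Omega_R^j M)\cong\Omega_R^{i+j}M$. Combining these facts yields
\[
\Omega_R^g M\approx\Omega_R^{i+j}M,
\]
and writing $a=i+j-g\ge 1$ this reads $\Omega_R^g M\approx\Omega_R^{g+a}M$. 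A single further application of $\Omega_R$ then promotes $\approx$ to $\cong$ and produces $\Omega_R^{g+1}M\cong\Omega_R^{g+1+a}M$, which is exactly the eventual periodicity of $F$ with period $a$.

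The only real bookkeeping point is the distinction between $\cong$ and $\approx$: the syzygy functor collapses the two only when its index is at least $1$. The borderline case $i=0$ forces $g=0$ and $j\ge 1$, so the hypothesis itself already reads $N^*\approx\Omega_R^j M$ and combines directly with the $i=0$ instance of Lemma \ref{lem dual st iso}, after which the same final $\Omega_R$ step concludes. Aside from this care, no machinery beyond Lemma \ref{lem dual st iso} and Proposition \ref{gperf duality} is needed.
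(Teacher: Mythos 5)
Your proposal is correct and follows essentially the same route as the paper: apply Proposition \ref{gperf duality} to dualize back from $N$ to $M$, feed the resulting identity $\Ext_R^g(N,R)\cong M$ into Lemma \ref{lem dual st iso} applied to $N$ to get $\Omega_R^gM\approx\Omega_R^i((\Omega_R^iN)^*)\approx\Omega_R^{i+j}M$, and then apply $\Omega_R$ once to upgrade $\approx$ to $\cong$. The extra commentary you give on why $\Omega_R$ collapses $\approx$ to $\cong$ and on the degenerate case $i=0$ is accurate bookkeeping that the paper leaves implicit, but it does not change the argument.
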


\begin{proof}
By Lemma \ref{gperf duality} and Lemma \ref{lem dual st iso}, we have $\Omega_R^gM\approx\Omega_R^i((\Omega_R^iN)^*)\approx\Omega_R^{i+j}M$. Therefore, $\Omega_R^{g+1}M\cong\Omega_R^{i+j+1}M=\Omega_R^{g+1+(i+j-g)}$. Hence, $M$ is eventually periodic with period $i+j-g$.
\end{proof}

Finally, we prove Corollary \ref{dey ep}. For this purpose, we prove the following lemma.

\begin{lem}\label{hs k ep}
Let $R$ be a local ring with residue field $k$. Then the minimal free resolution of $k$ is eventually periodic if and only if $R$ is a hypersurface.
\end{lem}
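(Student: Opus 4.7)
The plan is to prove the two implications of the equivalence separately, using classical structural results on complete intersections.

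For the \emph{if} direction, suppose $R$ is a hypersurface. If $R$ is regular, then $\pd_R k = \dim R < \infty$, so $\Omega_R^n k = 0$ for every $n > \dim R$, and the minimal free resolution is trivially eventually periodic. If $R$ is singular, write $R\cong S/(f)$ with $(S,\mathfrak{n},k)$ a regular local ring and $0\ne f\in\mathfrak{n}^2$. For every integer $n\ge\depth R$, the syzygy $\Omega_R^n k$ is maximal Cohen--Macaulay over $R$ by the Auslander--Buchsbaum formula, and has no nonzero free summand by minimality of the resolution. By Eisenbud's matrix factorization theorem, its minimal free resolution is $2$-periodic, so $\Omega_R^{n+2}k\cong\Omega_R^n k$, whence the minimal free resolution of $k$ is eventually periodic with period $2$.

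For the \emph{only if} direction, suppose the minimal free resolution of $k$ is eventually periodic with period $a>0$. Then the Betti sequence $\beta_n^R(k)=\dim_k\operatorname{Tor}_n^R(k,k)$ is eventually periodic, and in particular bounded. By a classical theorem of Gulliksen, the boundedness of the Betti numbers of the residue field forces $R$ to be a complete intersection. Moreover, by Avramov's theory, the codimension of $R$ as a complete intersection coincides with the complexity of $k$, defined as the polynomial growth rate of $\beta_n^R(k)$. Since the Betti sequence is bounded, the complexity is at most $1$, so the codimension is at most $1$, and $R$ is a hypersurface.

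The main obstacle is the converse direction, which rests on the nontrivial Gulliksen--Avramov structure theory for complete intersections; the forward direction reduces more quickly to Eisenbud's matrix factorization theorem once a sufficiently high maximal Cohen--Macaulay syzygy has been identified.
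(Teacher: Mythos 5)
Your proof follows essentially the same strategy as the paper: bounded Betti numbers of $k$ force $R$ to be a hypersurface (the paper cites a single result of Avramov; you decompose it as Gulliksen plus Avramov's complexity theory, which is fine), and Eisenbud's matrix factorization theorem yields eventual $2$-periodicity in the converse direction.

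There is, however, a gap in the forward direction. Being a hypersurface means that the $\m$-adic completion $\hat{R}$ is isomorphic to $S/(f)$ for some regular local ring $S$; it does not give $R\cong S/(f)$ directly, so you cannot simply ``write $R\cong S/(f)$''. The paper avoids this by first reducing to the case where $R$ is complete. That reduction needs a word of justification: one uses that $\Omega^n_{\hat R}k\cong\Omega^n_R k\otimes_R\hat R$, and that finitely generated modules over a noetherian local ring are isomorphic if and only if their completions are (see, e.g., \cite[Corollary 1.15]{L}), so that periodicity of syzygies descends from $\hat R$ to $R$. A second, smaller point: the maximal Cohen--Macaulayness of $\Omega^n_R k$ for $n\ge\depth R$ follows from the depth lemma for syzygies, not from the Auslander--Buchsbaum formula, which concerns modules of finite projective dimension. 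With these repairs the argument is correct and matches the paper's.
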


\begin{proof}
Suppose that the minimal free resolution of $k$ is eventually periodic. Then the Betti numbers of $k$ are bounded. By \cite[Theorem 8.1.2]{A}, $R$ is a hypersurface.

Suppose that $R$ is a hypersurface of dimension $d$. We may assume that $R$ is complete. Then $R$ is isomorphic to a ring $S/(f)$, where $S$ is a regular local ring and $f\in S\setminus\{0\}$ is not a unit. Since $\Omega_R^dk$ is maximal Cohen--Macaulay, there exists a matrix factorization $(\varphi,\psi)$ of $f$ such that $\Coker\varphi\cong\Omega_R^dk$; see \cite[Chapter 7]{Y} for details. Hence, the minimal free resolution of $\Omega_R^dk$ is periodic. It follows that the minimal free resolution of $k$ is eventually periodic.
\end{proof}

\begin{proof}[Proof of Corollary \ref{dey ep}]
By \cite[Lemma 3.2]{D}, $R$ is Gorenstein. Hence, $k$ is totally self-dual. The assertion follows from Theorems \ref{thm mfr ep}, \ref{thm mfr ep2}, and Lemma \ref{hs k ep}.
\end{proof}

\begin{ac}
The author would like to thank his Ph.D. advisor Ryo Takahashi for giving many thoughtful questions and helpful discussions. He also thanks Kaito Kimura and Yuki Mifune, and Yuya Otake for their valuable comments.
\end{ac}

\end{document}